\newtheorem{thm}[equation]{Theorem}
\newtheorem{cor}[equation]{Corollary}
\numberwithin{equation}{section}
\renewcommand\a{\alpha}
\newcommand\g{\gamma}
\newcommand\e{\varepsilon}
\renewcommand\l{\lambda}
\newcommand\D{\Delta}
\renewcommand\D{\Delta}
\newcommand\f{\frac}
\newcommand{\Z}{{\mathbb{Z}}}
\newcommand{\R}{{\mathbb{R}}}
\newcommand{\C}{{\mathbb{C}}}
\newcommand{\A}{{\mathbb{A}}}
\newcommand{\Q}{{\mathbb{Q}}}
\renewcommand\Re{\text{Re~}}
\newcommand\srel[2]{\begin{smallmatrix} {#1} \\ {#2} \end{smallmatrix}}
\newcommand{\gobble}[1]{}
  \newcommand{\rangeref}[2]{%
    \ref{#1}--\afterassignment\gobble\fam 0\ref{#2}%
  }
\def\imod#1{\allowbreak\mkern5mu({\operator@font mod}\,#1)}
\begin{document}

\title{Residual automorphic forms and spherical unitary representations of exceptional groups}

\date{April 29, 2012}

\author{Stephen D. Miller\thanks{Supported by NSF grant DMS-0901594.}\\
Rutgers University\\
\tt{miller@math.rutgers.edu}}

\maketitle

\begin{abstract}

Arthur has conjectured that the unitarity of a number of representations can be shown by finding appropriate automorphic realizations.
  This has been verified
 for classical groups by M\oe glin  and for the exceptional Chevalley group $G_2$ by Kim.  In this paper we extend their results on spherical representations to the remaining exceptional groups  $E_6$, $E_7$, $E_8$, and $F_4$.  In particular we prove Arthur's conjecture that the spherical constituent of an unramified principal series of a Chevalley group over any local field of characteristic zero is  unitarizable if its Langlands parameter coincides with half the weighted marking of a coadjoint nilpotent orbit of the Langlands dual Lie algebra.

\vspace{.2cm}

\noindent keywords:~Arthur's conjectures, unitary dual, residual Eisenstein series, automorphic realizations, unipotent representations, small representations.

\end{abstract}

\section{Introduction}\label{sec:intro}

The most trivial automorphic form -- the constant function on the complex upper half plane -- has a complicated construction as the residue of the usual nonholomorphic Eisenstein series $\sum_{(c,d)\in\Z^2-\{(0,0)\}}\f{y^s}{|cz+d|^{2s}}$ at the polar point $s=1$.  The constant residue reflects the fact that the principal series representation associated to this Eisenstein series has a trivial quotient.  This is the starting point for a fascinating  mechanism of constructing automorphic realizations of certain ``small'' representations of real groups, meaning those with low Gelfand-Kirillov dimension (equivalently, those whose wavefront set is small).  For example, the classical Jacobi $\theta$-function is a residue of an Eisenstein series on the metaplectic double cover of $SL(2,\R)$.  When the residues are square-integrable (which can be checked by a criterion of Langlands reviewed in section~\ref{sec:Langlands}), they lie in the discrete automorphic spectrum and  hence automatically give unitary representations.  This strategy was famously used by Speh \cite{Speh} to construct new unitary representations of $SL(4,\R)$, representations which were difficult to approach until she added this arithmetic grasp.

As part of a broad set of conjectures,
Arthur \cite{Arthur} has proposed that  spherical constituents of principal series representations
at certain points of reduction are unitary; moreover, their unitarity should, as above, be a consequence of an automorphic realization. In the case of unramified representations of split groups,  one possibility is a precise realization of these representations as residues of Eisenstein series.   This reduces Arthur's conjectures to verifying certain residual Eisenstein series are square-integrable.  In this paper we prove the square-integrability and hence
these conjectures, whose statement we now recall.

Let $G$ denote  a Chevalley group and   $B=NA$ be a fixed minimal parabolic subgroup of $G$, with $N$ a maximal unipotent subgroup and $A$ a  maximal torus.  We shall assume, as we may, that
$N$ is generated by the one-parameter subgroups for each of the Chevalley basis positive root vectors, and that the Lie algebra $\frak a$ of $A$ is spanned by the Chevalley basis coroot vectors.
To any coadjoint nilpotent orbit $\cal O^\vee$ in the complexified Lie algebra  $\frak g^\vee \otimes \C$ of the Langlands dual group $G^\vee$, there is a  linear functional $2\l_0(\cal O^\vee)$ on  ${\frak a}(\C)$  coming from  its marked Dynkin diagram.
For example, in the case of  the  ``regular orbit'' (i.e., the unique dense orbit)
 $\l_0(\cal O^\vee)$ is equal to $\rho$, half the sum of the positive roots.  Suppose furthermore that $\cal O^\vee$ is {\em distinguished}, meaning  it does not intersect any proper Levi subalgebra. Let $F$ be a number field and $\A_F$ its ring of adeles.
 Arthur conjectured that for any place $v$ of $F$, the spherical constituent of the unramified principal series representation of $G(F_v)$ with Langlands parameter $\l_0(\cal O^\vee)$ is unitarizable, and moreover occurs discretely in  the automorphic spectrum $L^2(G(F)\backslash G(\A_F))$.  The situation when ${\cal O}^\vee$ is not distinguished reduces to this; see corollary~\ref{thecor} and the remarks following it.

 The discrete spectrum includes cusp forms, which are very difficult to construct, but can sometimes be counted using the trace formula (or more commonly, variants of the trace formula).  The rest of the discrete spectrum consists of residual Eisenstein series at special, delicate points (see \cite{MWbook} for a detailed general reference).  These are automorphic forms occurring as the leading coefficient in a multivariable Laurent series expansion of   Eisenstein series, which are automorphic realizations of principal series representations.  We now state the definition of the spherical, unramified minimal parabolic Eisenstein series, which are the relevant type in this paper.  The adjoint action of $A(\A_F)$ on the Chevalley basis simple root vectors, composed with the global valuation on $\A_F$, gives rise to a character $a\mapsto a^\lambda$ of $A(F)\backslash A(\A_F)$ for each   $\l\in{\frak a}^*\otimes\C$, the complex span of the simple roots.  Let $a(g)$ denote the Iwasawa $A$-factor of $g\in G(\A_F)$.
  The (unramified) minimal parabolic Eisenstein series is defined as
\begin{equation}\label{eisdef}
    E(\l,g) \ \ := \ \ \sum_{\g\,\in\,B(F)\backslash G(F)}a(\g g)^{\l+\rho}\ , \ \  \ \ \l \, \in \, {\frak a}^*\otimes \C\,,
\end{equation}
initially as an absolutely convergent sum when  $\l-\rho$  lies in the interior of the positive Weyl chamber, and then for general $\l$ by meromorphic continuation.  At values of $\l$ for which $E(\l,g)$ is holomorphic, it and its right translates generate an automorphic realization of each the local principal series representations $V_{\l}=\{f:G(F_v)\rightarrow \C\,|\, f(nag)=a^\l f(g)\,, n\in N(F_v)\,,a\in A(F_v)\}$ for each place $v$ of $F$.

   Arthur's conjectures suggest  that the residues of $E(\l,g)$ at $\l=\l_0(\cal O^\vee)$ for distinguished orbits $\cal O^\vee$ lie in $L^2(G(F)\backslash G(\A_F))$ (though they would not be contradicted were this false, as there would remain the possibility of cuspidal realizations).  Such a function then generates an irreducible subrepresentation of $L^2(G(F)\backslash G(\A_F))$, whose unitarity comes from the $L^2$ inner product.  Due to the agreement of infinitesimal characters, it must be the spherical constituent of each local principal series representation $V_{\l_0({\cal O}^\vee)}$ over all completions $F_v$ of $F$.
Our main result is a proof of this $L^2$ property for the exceptional groups $E_6$, $E_7$, $E_8$, and $F_4$ (it is known for  classical groups and $G_2$ \cite{jacquet,MoeglinWaldspurger,Moeglin,Kim}).
We summarize our findings as follows:

\begin{thm}\label{findings}
Let $F$ be a number field, $G$ be a Chevalley group of type $E_6$, $E_7$, $E_8$, or $F_4$, and $\frak g$ its Lie algebra.  Let $\l_0(\cal O^\vee)\in {\frak a}^*\otimes\C$ and a coadjoint nilpotent orbit $\cal O$ in $\frak g(\C)$ be as defined by one of the following pairs, in which $\omega_1,\omega_2,\ldots$ refer to  fundamental weights in the usual Bourbaki numbering and the orbit $\cal O$ is described by its Bala-Carter label (see \cite{Collingwood}):
\begin{center}\begin{tabular}{|c|c|c||c|c|c|}
\hline
 $G$ &  $\l_0(\cal O^\vee)$ &  $\cal O$ &  $G$ &  $\l_0(\cal O^\vee)$ & $\cal O$ \\
\hline
 $E_6$ &  $\omega_1+\omega_4+\omega_6$ &  $A_2$ &  $E_8$ &  $\omega_5$ & $E_8(a_7)$ \\
 $E_6$ &  $\rho-\omega_4$ &  $A_1$ &  $E_8$ &  $\omega_4+\omega_8$ & $D_4(a_1)+A_2$ \\
 $E_6$ &  $\rho$ &  $0$ &  $E_8$ &  $\omega_4+\omega_7$ & $D_4(a_1)+A_1$ \\
 $E_7$ &  $\omega_4+\omega_7$ &  $D_4(a_1)$ &  $E_8$ &  $\omega_4+\omega_7+\omega_8$ & $D_4(a_1)$ \\
 $E_7$ &  $\omega_1+\omega_4+\omega_7$ &  $A_2+2A_1$ &  $E_8$ &  $\omega_1+\omega_4+\omega_7$ & $2A_2$ \\
 $E_7$ &  $\omega_1+\omega_4+\omega_6+\omega_7$ &  $A_2$ &  $E_8$ &  $\omega_1+\omega_4+\omega_7+\omega_8$ & $A_2+2A_1$ \\
 $E_7$ &  $\rho-\omega_4-\omega_6$ &  $2A_1$ &  $E_8$ &  $\omega_1+\omega_4+\omega_6+\omega_8$ & $A_2+A_1$ \\
 $E_7$ &  $\rho-\omega_4$ &  $A_1$ &  $E_8$ &  $\rho-\omega_2-\omega_3-\omega_5$ & $A_2$ \\
 $E_7$ &  $\rho$ &  $0$ &  $E_8$ &  $\rho-\omega_4-\omega_6$ & $2A_1$ \\
 $F_4$ &  $\omega_3$ &  $F_4(a_3)$ &  $E_8$ &  $\rho-\omega_4$ & $A_1$ \\
 $F_4$ &  $\omega_1+\omega_3$ &  $A_1+A_1s$ &  $E_8$ &  $\rho$ & $0$ \\
 $F_4$ &  $\rho-\omega_2$ &  $A_1s$ &   &   &  \\
 $F_4$ &  $\rho$ &  $0$ &   &   &  \\
\hline
\end{tabular}\end{center}
Then the unramified Borel Eisenstein series (\ref{eisdef})  has a square-integrable residue
 at $\l=\l_0(\cal O^\vee)$.
 Its local representation of $G(F_v)$  is unitary for each place $v$ of $F$, and furthermore has  wavefront set ${\cal O}$ if $v$ is archimedean.
\end{thm}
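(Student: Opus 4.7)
The plan is to apply the Langlands square-integrability criterion reviewed in section~\ref{sec:Langlands} to the residue of $E(\l,g)$ at each point $\l=\l_0(\cal O^\vee)$ listed in the table.  This criterion reduces the $L^2$ question for a minimal-parabolic Eisenstein residue to a finite combinatorial condition on the exponents appearing in its constant term along $B$: each exponent $\mu$ must satisfy $\Re\langle \mu-\rho,\omega_i^\vee\rangle < 0$ for every fundamental coweight $\omega_i^\vee$.

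The first step is to expand the constant term of $E(\l,g)$ along $B$ via the Gindikin--Karpelevich formula
\begin{equation*}
 E(\l,g)_B \ = \ \sum_{w\in W} c_w(\l)\, a(g)^{w\l + \rho}, \qquad c_w(\l) \ = \ \prod_{\substack{\alpha > 0 \\ w\alpha < 0}} \f{\xi(\langle \l, \alpha^\vee\rangle)}{\xi(1 + \langle \l, \alpha^\vee\rangle)},
\end{equation*}
where $\xi$ is the completed Dedekind zeta function of $F$. For each $\l_0$ in the table I would enumerate the positive roots $\alpha$ with $\langle\l_0,\alpha^\vee\rangle = 1$ (which produce poles of $c_w$) and with $\langle\l_0,\alpha^\vee\rangle = 0$ (which produce zeros via a denominator pole of $\xi$), and determine the maximal order of the pole of $E(\l,g)$ at $\l_0$ by taking an iterated residue along a suitable affine chart of lines through $\l_0$.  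The leading residue then has the form
\begin{equation*}
 \phi(g) \ = \ \sum_{w\in W_{\l_0}} d_w\, a(g)^{w\l_0 + \rho}
\end{equation*}
for a computable subset $W_{\l_0}\subset W$ whose elements have inversion sets meeting the pole hyperplanes often enough to survive the residue, with constants $d_w$ expressible through special values of $\xi$.

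The second step is to verify Langlands' inequalities for every $w \in W_{\l_0}$.  This reduces the $L^2$ property to a finite check on the Weyl group, which I would organize by first quotienting by the stabilizer of $\l_0$ in $W$ (often large because many Dynkin labels of the distinguished orbits in the table vanish) and then enumerating representatives of the surviving cosets.  Once the inequalities hold, $\phi \in L^2(G(F)\backslash G(\A_F))$, so at each place $v$ of $F$ its local component is a unitary subquotient of $V_{\l_0}$, and agreement of infinitesimal characters identifies it with the spherical constituent.

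The main obstacle is the computational scale, especially for $E_8$, whose Weyl group has order $696\,729\,600$: even after cosetting out the stabilizer of $\l_0$ the inversion-set bookkeeping is substantial, so systematic computer algebra together with Bruhat-order induction to propagate cancellations across $W_{\l_0}$ is essential.  The archimedean wavefront set assertion follows at the end by observing that the orbit $\cal O$ in the table is (by construction) the Spaltenstein--Lusztig dual of $\cal O^\vee$, and that Barbasch--Vogan's classification of unitary representations with infinitesimal character $\l_0$ leaves no room for a strictly larger wavefront set once the $L^2$-property, and hence the irreducibility of the spherical constituent as the representation realized by $\phi$, has been established.
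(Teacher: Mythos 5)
Your overall framework---apply Langlands' $L^2$ criterion to the Borel constant term and reduce to a combinatorial check on the exponents---is the same as the paper's, and indeed there is essentially no other path.  However, the plan as written leaves out the device that makes the computation go through, and it misdescribes the shape of the residue in a way that matters for the analysis.

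The crux of the paper's method (section~\ref{sec:method}) is not to work at $\l_0(\mathcal O^\vee)$ directly but at a Weyl-translate $\l_1 = 2s\omega_j - \rho$, deforming along the single line $\l = \l_1 + \e\omega_j$.  For this special form one has $\langle\l,\alpha_i^\vee\rangle \equiv -1$ for $i\neq j$, so $c(-1)=0$ forces $M(w,\l)\equiv 0$ identically unless $w\alpha_i>0$ for all $i\neq j$.  This cuts the sum down to $W_{\text{rel}}$ of size $\#(W/W_M)$, which is the thing that makes $E_8$ feasible.  Your alternative---quotient by the stabilizer of $\l_0$---is a different and generally much weaker reduction.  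For $\l_0=\omega_4+\omega_8$ in $E_8$, the stabilizer is $W(A_2\times A_1\times A_3)$ of order $288$, leaving about $2.4\times 10^6$ cosets, whereas the paper's $W_{\text{rel}}$ has only $17280$ elements; the gap is worse in other cases.  Moreover the stabilizer quotient tells you nothing about which terms vanish; the $W_{\text{rel}}$ reduction comes from an identical vanishing of $M(w,\l)$, not from a symmetry of $\l_0$.  Working directly at $\l_0$ with ``iterated residues along a suitable affine chart of lines'' also reintroduces the problem of choosing a flag of hyperplanes; the paper's one-parameter deformation and the functional equation (\ref{eisfe}) sidestep this.

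Two further points are not mere efficiency.  First, the leading Laurent coefficient is not of the form $\sum_w d_w\,a(g)^{w\l_0+\rho}$ with constants $d_w$: as the paper notes after (\ref{grouping2}), the coefficients $C(\mu,n,g)$ may carry nonzero polynomial factors in $\log a(g)$.  (This is actually exploited: the first nonvanishing $C(\mu,k,g)$ having a genuinely polynomial dependence on $H$ is what makes the nonvanishing independent of the ground field $F$, avoiding any appeal to special values of $\xi$.)  Second, and more importantly, the decision of which $\mu$ contribute cannot be read off from pole orders of individual $c_w(\l)$: one must control the full Laurent expansion of the grouped sums $\sum_{w\in W(\l_1,\mu)} M(w,\l_1+\e\l_2)a(g)^{\e w\l_2}$, where heavy cancellations occur.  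The precise condition to be checked is the two-sided one in (\ref{punchline}): some admissible $\mu$ has a nonzero coefficient at order $m$ \emph{and} every inadmissible $\mu$ has vanishing coefficients at all orders $\le m$.  Your description of $W_{\l_0}$ as ``elements whose inversion sets meet the pole hyperplanes often enough to survive'' does not capture this, and the Bruhat-induction heuristic is not a substitute for it.  Finally, the wavefront-set claim is in the paper a citation of the Ciubotaru--Trapa appendix to \cite{GMV}; the informal appeal to Barbasch--Vogan duality plus a ``no room'' argument does not, as stated, yield a proof in the exceptional cases.
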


\noindent The case of $\l_0(\cal O^\vee)=\rho$ (which has a trivial residue) is of course well-known.  The cases with ${\cal O}=A_1$ for $E_6$, $E_7$, and $E_8$ are the automorphic realizations of the minimal representation constructed in  \cite{GRS}, and the cases with ${\cal O}=2A_1$ for $E_7$ and $E_8$ are likewise the automorphic realizations of the ``next-to-minimal'' representation constructed  in \cite{GMV}.  Indeed, both the proof and immediate motivation for writing this paper arose out of the collaboration \cite{GMV}.  The appearance of small automorphic residual representations in certain string theory problems there and in \cite{GMRV} led us to develop computational tools used here.  The theorem's assertion about the wavefront set is a direct consequence of Theorem A.5 of \cite[Appendix A]{GMV}, by  Ciubotaru and  Trapa.

Theorem~\ref{findings} has well-known implications for the unitarity of spherical representations coming from nondistinguished orbits (which was shown by Barbasch-Moy \cite{BM1} for nonarchimedean fields):

\begin{cor}\label{thecor}
Let $2\l_0(\cal O^\vee)$ be the weighted marking of any coadjoint nilpotent orbit of ${\frak g}^\vee\otimes\C$, distinguished or not.  Then the spherical constituent of $V_{\l_0(\cal O^\vee)}$ over any local field of characteristic zero is unitarizable.
\end{cor}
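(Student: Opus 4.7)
The plan is to reduce to the distinguished case using the Bala-Carter classification, and then to transfer unitarity along normalized parabolic induction.  By Bala-Carter, every nilpotent orbit $\O^\vee$ in $\fg^\vee \otimes \C$ equals the saturation $G^\vee\cdot\O^\vee_L$ of a (uniquely determined up to $G^\vee$-conjugacy) distinguished nilpotent orbit $\O^\vee_L$ in the semisimple part of some Levi subalgebra $\fl^\vee \subset \fg^\vee$.  Let $P=LN \subset G$ be the parabolic whose Levi $L$ is dual to $L^\vee$; its simple factors are classical, $G_2$, or one of $E_6$, $E_7$, $E_8$, $F_4$, so by the results of M\oe glin-Waldspurger \cite{MoeglinWaldspurger}, Kim \cite{Kim}, and Theorem~\ref{findings} (applied factor by factor), the spherical constituent $\pi_L$ of the unramified $L(F_v)$-principal series $V^{L(F_v)}_{\l_0(\O^\vee_L)}$ is unitarizable over every local field $F_v$ of characteristic zero.

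For the parameter matching, let $e \in \O^\vee_L$ and let $(e, h_L, f)$ be a Jacobson-Morozov triple in $\fl^\vee$; this is a fortiori a Jacobson-Morozov triple in $\fg^\vee$ for $G^\vee \cdot e = \O^\vee$.  Consequently the characteristic $h_L$, regarded as an element of $\fa^*$, is Weyl-conjugate to the characteristic of $\O^\vee$, and hence $\l_0(\O^\vee_L)$ and $\l_0(\O^\vee)$ lie in the same $W$-orbit in $\fa^*$.  Since principal series with Weyl-conjugate parameters share the same irreducible constituents, $V^{G(F_v)}_{\l_0(\O^\vee)}$ and $V^{G(F_v)}_{\l_0(\O^\vee_L)}$ have the same spherical constituent.

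By transitivity of induction, $V^{G(F_v)}_{\l_0(\O^\vee_L)}$ equals the normalized parabolic induction from $P(F_v)$ to $G(F_v)$ of the $L(F_v)$-module $V^{L(F_v)}_{\l_0(\O^\vee_L)}$.  Replacing the latter by $\pi_L$, the induced module $\Pi := \mathrm{Ind}_{P(F_v)}^{G(F_v)}\pi_L$ is unitary (normalized parabolic induction preserves unitarity) and of finite length.  Frobenius reciprocity shows that $\Pi$ has a one-dimensional space of spherical vectors and shares an infinitesimal character with $V^{G(F_v)}_{\l_0(\O^\vee_L)}$; being unitary and of finite length it decomposes as a direct sum of irreducibles, the unique summand containing the spherical vector being irreducible, spherical, and of the correct infinitesimal character.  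By uniqueness of a spherical irreducible representation with given infinitesimal character, this summand coincides with the spherical constituent of $V^{G(F_v)}_{\l_0(\O^\vee)}$, which thereby inherits unitarizability.  The main subtle point is the parameter matching in the second paragraph, which relies on the fact that the Bala-Carter correspondence takes $\O^\vee$ to $\O^\vee_L$ via saturation (rather than via Lusztig-Spaltenstein induction), so that a Jacobson-Morozov triple in $\fl^\vee$ is automatically one in $\fg^\vee$; see \cite{Collingwood}.
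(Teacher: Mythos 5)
The proposal is correct and takes essentially the same approach as the paper: reduce via Bala--Carter to a distinguished orbit $\mathcal O^\vee_L$ in a proper Levi, invoke Theorem~\ref{findings} together with its classical and $G_2$ analogues factor by factor on $L(F_v)$, and transfer unitarity to $G(F_v)$ by parabolic induction. You spell out two points the paper leaves as assertions --- that a Jacobson--Morozov triple in $\mathfrak l^\vee$ is automatically one in $\mathfrak g^\vee$, so that $\l_0(\mathcal O^\vee_L)$ and $\l_0(\mathcal O^\vee)$ are $W$-conjugate, and the Frobenius-reciprocity argument identifying the unique spherical irreducible summand of the induced module --- but the underlying argument is the same.
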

\begin{proof}  Every local field is a completion $F_v$ of some number field $F$.
By theorem~\ref{findings} we need only consider the case when $\cal O^\vee$ intersects the complexified Lie algebra $\frak l(\C)$ of   the Levi component of a proper parabolic subgroup $P$, which we may assume to be minimal among such parabolics.  We furthermore may assume $\frak l$ is chosen  compatibly with the Chevalley basis.  Then ${\cal O}^\vee_L={\cal O^\vee}\cap {\frak l}(\C)$ is distinguished in $\frak l(\C)$, and so by Theorem~\ref{findings} and its known analog for classical groups it is associated to a spherical unitary representation of $L(F_v)$.  Unitary induction of this representation from $L(F_v)$ to $G(F_v)$ gives a unitary representation which contains the spherical constituent of $V_{\l_0(\cal O^\vee)}$.  Indeed, this last statement reduces to the compatibility of their respective infinitesimal characters; that, in turn,   is equivalent to the agreement of the weighted marking of the nondistinguished orbit $\cal O^\vee$ of $\frak g^\vee(\C)$ with the weighted marking of the distinguished orbit ${\cal O}^\vee_L$ of $\frak l(\C)$.
\end{proof}
The unitary induction in the proof has an automorphic analog, as Eisenstein series induced from automorphic forms on the Levi component of a proper parabolic subgroup.  Thus the representations in the corollary also occur automorphically.

\vspace{.3 cm}

{\bf Acknowledgements:} I would like to thank James Arthur, Dan Barbasch, Dan Ciubotaru, Joseph Hundley, David Kazhdan,Dragan Milicic, Steven J. Miller,  Colette M\oe glin, Wilfried Schmid, Freydoon Shahidi, Peter Trapa, and David Vogan for their helpful discussions and explanations.  I would also like to thank my collaborators Michael Green and Pierre Vanhove for their stimulating suggestions.

\section{Langlands' constant term formula and $L^2$ condition}\label{sec:Langlands}

Let $\Delta^{+}$ and $\D^{-}$ denote the  positive and negative roots, respectively, of the Chevalley group $G$ with respect to its fixed minimal parabolic $B$, and  $\Sigma^+$ its positive simple roots.  We use the notation $\a^\vee$ for the coroot  of $\a\in\D^+$.
Langlands computed the constant term of the (unramified) minimal parabolic Eisenstein series (\ref{eisdef}) as
\begin{equation}\label{constanttermformula}
    \int_{N(F)\backslash N(\A_F)}E(\l,ng)\,dn \ \  = \ \ \sum_{w\,\in\,W}M(w,\l)\,a(g)^{w\l+\rho}\,,
\end{equation}
where $W$ is the Weyl group,
\begin{equation}\label{Mwlambdadef}
    M(w,\l) \ \ = \ \ \prod_{\srel{\a\,\in\,\D^{+}}{w\a\,\in\,\D^{-}}}c(\langle \l,\a^\vee\rangle)\,,
\end{equation}
and $c(s)$ is a meromorphic function on $\C$ which can be expressed as an explicit ratio involving the Dedekind $\zeta$-function of the number field $F$ (see \cite[\S6]{shahidi} and \cite[\S 3.7]{bump}.)
We shall not require this formula, but only its following direct consequences:~$c(s)$  has a first order zero at $s=-1$, a first order pole at $s=1$,  no zeroes or poles in $\{\Re{s}<-1\}$, and  satisfies  $c(s)c(-s)=1$, $c(0)=-1$.
 Langlands also showed the corresponding functional equation
\begin{equation}\label{eisfe}
    E(\l,g) \ \ = \ \ M(w,\l)\,E(w\l,g)
\end{equation}
of the meromorphic function $\l\mapsto E(\l,g)$, $\l\in{\frak a}^*\otimes \C$.
%
%
%

Any coefficient in a  (multivariable) Laurent expansion of an Eisenstein series in $\l$ is an automorphic function in $g$.  In particular suppose that $\l$ has the form $\l_1+\e\l_2$ for $\e\in\C$ and fixed $\l_1,\l_2\in{\frak a}^*\otimes \C$, and that $E(\l,g)$ has a zero of order $n$ at $\e=0$ (by convention, $n<0$ when there is a pole).   Grouping terms with similar powers together,
\begin{equation}\label{grouping1}
\aligned
     \int_{N(F)\backslash N(\A_F)}E(\l_1+\e\l_2,ng)\,dn \ \ & = \ \ \sum_{\mu\,\in\,W\l_1}a(g)^{\mu+\rho}\sum_{w\,\in\,W(\l_1,\mu)}
    M(w,\l_1+\e\l_2)\,a(g)^{\e w \l_2} \\
    & = \ \ \sum_{\mu\,\in\,W\l_1}a(g)^{\mu+\rho} \sum_{k \,\ge\,n} \e^k\,C(\mu,k,g)\,,
\endaligned
\end{equation}
where $W\l_1$ is the Weyl orbit of $\l_1$, $W(\l_1,\mu)=\{w\in W|w\l_1=\mu\}$, and the last expression represents the Laurent series expansion of the $w$-sum in $\e$.  Thus
the leading coefficient $\lim_{\e\rightarrow 0}\e^{-n}E(\l_1+\e\l_2,g)$ in its Laurent expansion in $\e$ is an automorphic form with constant term
\begin{equation}\label{grouping2}
   \int_{N(F)\backslash N(\A_F)} \lim_{\e\,\rightarrow\, 0}\e^{-n}\,E(\l_1+\e\l_2,ng)\,dn \ \ = \ \ \sum_{\mu\,\in\,W\l_1}a(g)^{\mu+\rho} \,C(\mu,n,g)\,.
\end{equation}
The interchange of the limit and integration here is justified because the limit may be computed as an integral over a compact space using Cauchy's theorem.
Note that certain $\mu$ in the Weyl orbit $W\l_1$ may have $C(\mu,n,g)\equiv 0$ as a function of $g$, while others may have a logarithmic factors; however, the polynomial growth in $g$ is always determined by the factor $a(g)^{\mu+\rho}$.
Langlands \cite[\S5]{Langlands} gave the condition that because $\lim_{\e\rightarrow 0}\e^{-n}E(\l_1+\e\l_2,g)$ is ``concentrated along $B$'', it is square-integrable if and only if the inequality
\begin{equation}\label{langlandscondition}
    \langle \mu,\omega_i\rangle \ < \ 0 \ \ \ \text{for any fundamental weight~}\omega_i\,
\end{equation}
holds for each   $\mu\in W\l_1$ in (\ref{grouping2}) such that $C(\mu,n,g)\not\equiv0$.

\section{Computational method}\label{sec:method}

Explicit computations with (\ref{constanttermformula}) are frequently unwieldy with large Weyl groups $W$.  However, a perturbation method was introduced in \cite[\S7]{GMV} which manageably  reduces the size involved. In terms of the parameterization $\l=\l_1+\e\l_2$ of the previous section, it corresponds to taking $\l_1=2s\omega_j-\rho$ and $\l_2=\omega_j$ for some fixed $s\in \C$ and fundamental weight $\omega_j$.  Such a $\l$ corresponds to a maximal parabolic Eisenstein series induced from the trivial representation.  Of course few half-markings $\l_0({\mathcal O}^\vee)$ satisfy these hypothesis for $\l_1$, but each turns out to have a Weyl translate  which does.  By the functional equation (\ref{eisfe}) the limiting value $\lim_{\e\rightarrow 0}\e^{-n}E(\l_1+\e\l_2,g)$ corresponds to a nonzero multiple of a residue at $\l_0({\mathcal O}^\vee)$.  Hence there is no less of generality in studying the series expansion near $\l=\l_1$ instead of $\l=\l_0({\cal O}^\vee)$.

With our choice of $\l=(2s+\e)\omega_j-\rho$ the inner products with simple coroots are
\begin{equation}\label{innerprod}
    \langle \l,\a_i^\vee\rangle \ = \, \ \left\{
                                          \begin{array}{ll}
                                            -1, & i\,\neq\,j \\
                                            2s+\e-1, & i\,=\,j\,.
                                          \end{array}
                                        \right.
\end{equation}
If $\e$ has sufficiently negative real part then
\begin{equation}\label{observation}
    c(\langle \l,\a^\vee\rangle) \ \ =  \ \ 0 \ \ \ \ \text{if} \ \ \ \langle \l,\a^\vee\rangle \,=\,-1\,,
\end{equation}
while no other factors $c(\langle \l,\a^\vee\rangle)$ in (\ref{Mwlambdadef}) have poles.  Thus by analytic continuation in $\e$ a term $M(w,\l)\equiv 0$ unless $w$ lies in the set
\begin{equation}\label{Wreldef}
    W_{\text{rel}} \ \ := \ \ \{ \,w\,\in\,W\,|\, w\a_i>0 \ \ \text{for all} \ i\,\neq\,j\,\}.
\end{equation}
As the tables in section~\ref{sec:data} show, $W_{\text{rel}}$ is significantly smaller than $W$:~its order equals $\#(W/W_M)$, where $W_M$ is the Weyl group of the root system spanned by $\{\a_i|i\neq j\}$.  Because it  is so much smaller than $W$,
it has fewer translates $W_{\text{rel}}\l_1$ of $\l_1$.  The tables also indicate an overwhelming majority of the  $W_{\text{rel}}$-translates    satisfy (\ref{langlandscondition}).  Square-integrability is therefore reduced to showing the existence of an integer $m$ such that
\begin{equation}\label{punchline}
   \aligned
\text{i)} \ \  & C(\mu,m,g)\,\not\equiv 0 \, \ \ \text{for some ~$\mu\,\in\,W_{\text{rel}}\l_1$~ satisfying (\ref{langlandscondition}), and }
\\
\text{ii)} \  \ & C(\mu,m',g) \equiv 0 \  \ \text{for all ~$m'\,\le\,m$~ and all ~$\mu\,\in\,W_{\text{rel}}\l_1$~ not satisfying (\ref{langlandscondition}).}
\endaligned
\end{equation}
Indeed,  there then exists some $n\le m$ such that $\lim_{\e\rightarrow 0}\e^{-n}E(\l_1+\e\l_2,g)$ lies in $L^2(G(F)\backslash G(\A_F))$, $n$ being the least integer such that $C(\mu,n,g)\not\equiv 0$ for some $\mu\in W_{\text{rel}}\l_1$ satisfying (\ref{langlandscondition}).

Thus the method boils down to the ability to efficiently compute certain coefficients $C(\mu,k,g)$ in the Laurent expansion (\ref{grouping1}).  We used exact, symbolic calculation with integer arithmetic (which is rigorous), and have made the programs available for download from the author's website at \url{http://www.math.rutgers.edu/~sdmiller/L2}.  We used Mathematica v.8, though many other software packages would have sufficed.  Mathematica is capable of exact calculations with the exact formula for  $c(s)$ when $F=\Q$, though they become cumbersome (and do not apply to general number fields);~it also crashes both Windows and Unix operating systems for large symbolic computations such as ours, forcing us to rewrite them differently using   some software tricks which we now describe.   We computed $c(  \langle \l,\a_i^\vee\rangle)=c(  \langle \l_1,\a_i^\vee\rangle+ \e\langle \l_2,\a_i^\vee\rangle)$ differently depending on the value of  $\langle \l_1,\a_i^\vee\rangle$ (which is always an integer in our examples):~near $s\neq \pm1$ we formally write $c(s)=-e^{c_\ell(s)-c_\ell(-s)}$ in order to satisfy the properties $c(s)c(-s)=1$ and $c(0)=-1$, while at near $s=\pm1$ we use the similar expression $c(s)=\mp(s\mp 1)^{\mp 1} e^{\mp c_{\ell,1}(1\mp s)}$ that takes into account the simple zero or pole.  These formal expressions are valid in a neighborhood of the relevant integer, and thus can be rigorously used to derive formulas for Laurent series expansions in $\e$.
As a result each $c(  \langle \l,\a_i^\vee\rangle)$ can be written as a power of $\epsilon$ times the exponential of a formal  function of $\langle \l_1,\a_i^\vee\rangle+ \e\langle \l_2,\a_i^\vee\rangle$.  By invariance properties we may take $g$ in (\ref{grouping1}) to lie in $A(F_\infty)$, and thus be the exponential of an element $H\in{\frak a}(F_\infty)$.  Hence $M(w,\l_1+\e\l_2)a(g)^{\e w \l_2}$ can always be written as a power of $\e$ times an exponential.
We used the observation that this power of $\e$ is constant over $W(\l_1,\mu)$ for a fixed $\mu$ in $W_{\text{rel}}\l_1$ to further speed up the computations.  These powers of $\e$ come from $w$ for which $\langle w\l_1,\a^\vee\rangle$ is equal to $-1$ or $1$ for some positive root $\a$ flipped by $w$.

At this point, for a fixed $\mu\in W_{\text{rel}}\l_1$ each $M(w,\l_1+\e\l_2)a(g)^{\e w \l_2}$, $w\in W(\l_1,\mu)$, is a fixed power of $\e$ times an overall multiplicative constant (which comes from the terms with $\langle \l_1,\a_i^\vee\rangle\in\{-1,0,1\}$) and the exponential of a function of $\e$.  To compute the Laurent series we compute the  Taylor  series development of that function in $\e$, and then use the power series $e^x=\sum_{k\ge 0}x^k/k!$ to derive the Taylor series of the full exponential term in $\e$.  We then multiply by the overall multiplicative constant and fixed power of $\e$, and then sum over $w\in W(\l_1,\mu)$.  This computes Laurent series of the inner sum in (\ref{grouping1}).  In each case, the first nonvanishing $C(\mu,k,g)$ has a nonzero polynomial dependence in $H$; this makes the nonvanishing independent of $c(\cdot)$ and hence also independent of the ground field $F$.

Ultimately, the calculation boils down to calculating averages of polynomials over highly symmetric, finite subsets of euclidean space -- that is,  a {\em design computation}.  Its difficulty stems from the appearance of large symmetric subsets which cannot be distinguished from an equidistributed set until a high degree polynomial is taken.  Such sets arise in these constant term calculations because of their similarity to the Weyl denominator formula.  In the largest cases we took advantage of repeated occurrences  of  certain values of $M(w,\l_1+\e\l_2)$ and its derivatives at $\e=0$, by compressing them into  new variables.  This dramatically sped up symbolic calculations and reduced {\sc RAM} requirements.

The computations were carried out on a Dell PowerEdge server, and required up to 45 GB of RAM. The lengthiest example (the first line in the $G=E_8$ table below) took about 3 days utilizing 8 CPUs, the vast majority of which was spent  verifying that Mathematica had correctly manipulated a symbolic expression.  Additionally, nearly all of these calculations have been reproduced using  completely independent code using LiE and Sage, with full agreement.  This code is also available from the author's website.

\section{Results of the calculation}\label{sec:data}

Below we present some details of the calculation  for Chevalley groups of type $E_6$, $E_7$,  $E_8$, and $F_4$.

\vspace{.2cm}

\noindent {\bf Legend for columns}:

$2\l_0(\cal O^\vee)$: the weighted marking of a distinguished (and hence even) coadjoint nilpotent orbit $\cal O^\vee$.  The integers $\langle 2\l_0({\cal O}^\vee),\a_1^\vee\rangle,\langle 2\l_0({\cal O}^\vee),\a_2^\vee\rangle,
\ldots$ are strung together in the standard Bourbaki numbering.

$\cal O$: the wavefront set of the spherical constituent of $V_{\l_0(\cal O^\vee)}$ (for $v$ archimedean).

$\l_1$: a Weyl-equivalent point to $\l_0(\cal O^\vee)$ which is more useful for our computational purposes in section~\ref{sec:method}, listed as $[\langle \l_1,\a_1^\vee\rangle,\langle \l_1,\a_2^\vee\rangle,
\ldots]$.

$W_{\text{rel}}$:  Weyl elements which give nonzero contributions to the constant term under the deformation $\l_1+\e\l_2$ (defined in (\ref{Wreldef})).

$\#\cap -{\cal C}^\vee$: The first number indicates how many terms in $W_{\text{rel}}\l_1$ lie in Langlands' region (\ref{langlandscondition}); the second number indicates how many do not  satisfy (\ref{langlandscondition}); and the third indicates how many of these would instead not satisfy (\ref{langlandscondition}) if the condition's inequality was weakened from $\langle \l,\omega_i\rangle < 0$ to $\langle \l,\omega_i\rangle \le 0$ for each $i$.

ord: the order of vanishing of $E(\l_1+\e\l_2,g)$ at $\e=0$.  This was called $n$ in section~\ref{sec:Langlands}.  The first entry for the $E_8$ table has an inequality; we did not determine $n$ here but found a value of  $m=3$ (in the notation of (\ref{punchline})).

\begin{center}
$E_6$

\begin{tabular}{cccccc}
\hline
 \multicolumn{1}{|c|}{$2\lambda_0(\mathcal O^\vee$)} &  \multicolumn{1}{c|}{${\mathcal O}$} &  \multicolumn{1}{c|}{$\l_1$} &  \multicolumn{1}{c|}{$\#W_{\text{rel}}$} &  \multicolumn{1}{c|}{$\#\cap-{\mathcal C}^{\vee}$} &  \multicolumn{1}{c|}{ord} \\
 \cline{1-1} \cline{2-2} \cline{3-3} \cline{4-4} \cline{5-5} \cline{6-6}
 \multicolumn{1}{|c|}{200202} &  \multicolumn{1}{c|}{$A_2$} &  \multicolumn{1}{l|}{[-1,4,-1,-1,-1,-1]} &  \multicolumn{1}{c|}{72} &  \multicolumn{1}{c|}{44/1/0} &  \multicolumn{1}{c|}{0}  \\
 \cline{1-1} \cline{2-2} \cline{3-3} \cline{4-4} \cline{5-5} \cline{6-6}
 \multicolumn{1}{|c|}{222022} &  \multicolumn{1}{c|}{$A_1$} &  \multicolumn{1}{l|}{[2,-1,-1,-1,-1,-1]} &  \multicolumn{1}{c|}{27} &  \multicolumn{1}{c|}{24/2/1} &  \multicolumn{1}{c|}{0}  \\
 \cline{1-1} \cline{2-2} \cline{3-3} \cline{4-4} \cline{5-5} \cline{6-6}
 \multicolumn{1}{|c|}{222222} &  \multicolumn{1}{c|}{$0$} &  \multicolumn{1}{l|}{[-1,-1,-1,-1,-1,-1]} &  \multicolumn{1}{c|}{1} &  \multicolumn{1}{c|}{1/0/0} &  \multicolumn{1}{c|}{0}   \\
 \cline{1-1} \cline{2-2} \cline{3-3} \cline{4-4} \cline{5-5} \cline{6-6}
\end{tabular}\end{center}

\begin{center}
$E_7$

\begin{tabular}{cccccc}
\hline
 \multicolumn{1}{|c|}{$2\lambda_0(\mathcal O^\vee$)} &  \multicolumn{1}{c|}{${\mathcal O}$} &  \multicolumn{1}{c|}{$\l_1$} &  \multicolumn{1}{c|}{$\#W_{\text{rel}}$} &  \multicolumn{1}{c|}{$\#\cap-{\mathcal C}^{\vee}$} &  \multicolumn{1}{c|}{ord} \\
 \cline{1-1} \cline{2-2} \cline{3-3} \cline{4-4} \cline{5-5} \cline{6-6}
 \multicolumn{1}{|c|}{0002002} &  \multicolumn{1}{c|}{$D_4(a_1)$} &  \multicolumn{1}{c|}{[-1,-1,4,-1,-1,-1,-1]} &  \multicolumn{1}{c|}{2016} &  \multicolumn{1}{c|}{638/27/2} &  \multicolumn{1}{c|}{1}  \\
 \cline{1-1} \cline{2-2} \cline{3-3} \cline{4-4} \cline{5-5} \cline{6-6}
 \multicolumn{1}{|c|}{2002002} &  \multicolumn{1}{c|}{$A_2+2A_1$} &  \multicolumn{1}{c|}{[-1,5,-1,-1,-1,-1,-1]} &  \multicolumn{1}{c|}{576} &  \multicolumn{1}{c|}{292/2/1} &  \multicolumn{1}{c|}{0}  \\
 \cline{1-1} \cline{2-2} \cline{3-3} \cline{4-4} \cline{5-5} \cline{6-6}
 \multicolumn{1}{|c|}{2002022} &  \multicolumn{1}{c|}{$A_2$} &  \multicolumn{1}{c|}{[7,-1,-1,-1,-1,-1,-1]} &  \multicolumn{1}{c|}{126} &  \multicolumn{1}{c|}{90/1/0} &  \multicolumn{1}{c|}{0} \\
 \cline{1-1} \cline{2-2} \cline{3-3} \cline{4-4} \cline{5-5} \cline{6-6}
 \multicolumn{1}{|c|}{2220202} &  \multicolumn{1}{c|}{$2A_1$} &  \multicolumn{1}{c|}{[4,-1,-1,-1,-1,-1,-1]} &  \multicolumn{1}{c|}{126} &  \multicolumn{1}{c|}{115/3/1} &  \multicolumn{1}{c|}{0} \\
 \cline{1-1} \cline{2-2} \cline{3-3} \cline{4-4} \cline{5-5} \cline{6-6}
 \multicolumn{1}{|c|}{2220222} &  \multicolumn{1}{c|}{$A_1$} &  \multicolumn{1}{c|}{[2,-1,-1,-1,-1,-1,-1]} &  \multicolumn{1}{c|}{126} &  \multicolumn{1}{c|}{97/28/0} &  \multicolumn{1}{c|}{0} \\
 \cline{1-1} \cline{2-2} \cline{3-3} \cline{4-4} \cline{5-5} \cline{6-6}
 \multicolumn{1}{|c|}{2222222} &  \multicolumn{1}{c|}{$0$} &  \multicolumn{1}{c|}{[-1,-1,-1,-1,-1,-1,-1]} &  \multicolumn{1}{c|}{1} &  \multicolumn{1}{c|}{1/0/0} &  \multicolumn{1}{c|}{0} \\
 \cline{1-1} \cline{2-2} \cline{3-3} \cline{4-4} \cline{5-5} \cline{6-6}
\end{tabular}\end{center}

\begin{center}
$E_8$

\begin{tabular}{cccccc}
\hline
 \multicolumn{1}{|c|}{$2\lambda_0(\mathcal O^\vee$)} &  \multicolumn{1}{c|}{${\mathcal O}$} &  \multicolumn{1}{c|}{$\l_1$} &  \multicolumn{1}{c|}{$\#W_{\text{rel}}$} &  \multicolumn{1}{c|}{$\#\cap-{\mathcal C}^{\vee}$} &  \multicolumn{1}{c|}{ord} \\
\cline{1-1} \cline{2-2} \cline{3-3} \cline{4-4} \cline{5-5} \cline{6-6}
 \multicolumn{1}{|c|}{00002000} &  \multicolumn{1}{c|}{$E_8(a_7)$} &  \multicolumn{1}{c|}{[-1,-1,-1,-1,4,-1,-1,-1]} &  \multicolumn{1}{c|}{241920} &  \multicolumn{1}{c|}{18881/3897/1329} &  \multicolumn{1}{c|}{$\le 3$} \\
 \cline{1-1} \cline{2-2} \cline{3-3} \cline{4-4} \cline{5-5} \cline{6-6}
 \multicolumn{1}{|c|}{00020002} &  \multicolumn{1}{c|}{$D_4(a_1)+A_2$} &  \multicolumn{1}{c|}{[-1,7,-1,-1,-1,-1,-1,-1]} &  \multicolumn{1}{c|}{17280} &  \multicolumn{1}{c|}{3638/2/1} &  \multicolumn{1}{c|}{0}  \\
 \cline{1-1} \cline{2-2} \cline{3-3} \cline{4-4} \cline{5-5} \cline{6-6}
 \multicolumn{1}{|c|}{00020020} &  \multicolumn{1}{c|}{$D_4(a_1)+A_1$} &  \multicolumn{1}{c|}{[-1,6,-1,-1,-1,-1,-1,-1]} &  \multicolumn{1}{c|}{17280} &  \multicolumn{1}{c|}{8902/603/22} &  \multicolumn{1}{c|}{1} \\
 \cline{1-1} \cline{2-2} \cline{3-3} \cline{4-4} \cline{5-5} \cline{6-6}
 \multicolumn{1}{|c|}{00020022} &  \multicolumn{1}{c|}{$D_4(a_1)$} &  \multicolumn{1}{c|}{[-1,-1,-1,-1,-1,-1,8,-1]} &  \multicolumn{1}{c|}{6720} &  \multicolumn{1}{c|}{3143/49/1} &  \multicolumn{1}{c|}{1} \\
 \cline{1-1} \cline{2-2} \cline{3-3} \cline{4-4} \cline{5-5} \cline{6-6}
 \multicolumn{1}{|c|}{20020020} &  \multicolumn{1}{c|}{$2A_2$} &  \multicolumn{1}{c|}{[10,-1,-1,-1,-1,-1,-1,-1]} &  \multicolumn{1}{c|}{2160} &  \multicolumn{1}{c|}{1099/1/0} &  \multicolumn{1}{c|}{0} \\
 \cline{1-1} \cline{2-2} \cline{3-3} \cline{4-4} \cline{5-5} \cline{6-6}
 \multicolumn{1}{|c|}{20020022} &  \multicolumn{1}{c|}{$A_2+2A_1$} &  \multicolumn{1}{c|}{[8,-1,-1,-1,-1,-1,-1,-1]} &  \multicolumn{1}{c|}{2160} &  \multicolumn{1}{c|}{1647/13/4} &  \multicolumn{1}{c|}{0}\\
 \cline{1-1} \cline{2-2} \cline{3-3} \cline{4-4} \cline{5-5} \cline{6-6}
 \multicolumn{1}{|c|}{20020202} &  \multicolumn{1}{c|}{$A_2+A_1$} &  \multicolumn{1}{c|}{[7,-1,-1,-1,-1,-1,-1,-1]} &  \multicolumn{1}{c|}{2160} &  \multicolumn{1}{c|}{1763/157/26} &  \multicolumn{1}{c|}{0}  \\
 \cline{1-1} \cline{2-2} \cline{3-3} \cline{4-4} \cline{5-5} \cline{6-6}
 \multicolumn{1}{|c|}{20020222} &  \multicolumn{1}{c|}{$A_2$} &  \multicolumn{1}{c|}{[-1,-1,-1,-1,-1,-1,-1,13]} &  \multicolumn{1}{c|}{240} &  \multicolumn{1}{c|}{195/1/0} &  \multicolumn{1}{c|}{0} \\
 \cline{1-1} \cline{2-2} \cline{3-3} \cline{4-4} \cline{5-5} \cline{6-6}
 \multicolumn{1}{|c|}{22202022} &  \multicolumn{1}{c|}{$2A_1$} &  \multicolumn{1}{c|}{[-1,-1,-1,-1,-1,-1,-1,8]} &  \multicolumn{1}{c|}{240} &  \multicolumn{1}{c|}{229/2/0} &  \multicolumn{1}{c|}{0}  \\
 \cline{1-1} \cline{2-2} \cline{3-3} \cline{4-4} \cline{5-5} \cline{6-6}
 \multicolumn{1}{|c|}{22202222} &  \multicolumn{1}{c|}{$A_1$} &  \multicolumn{1}{c|}{[-1,-1,-1,-1,-1,-1,-1,4]} &  \multicolumn{1}{c|}{240} &  \multicolumn{1}{c|}{224/15/0} &  \multicolumn{1}{c|}{0} \\
 \cline{1-1} \cline{2-2} \cline{3-3} \cline{4-4} \cline{5-5} \cline{6-6}
 \multicolumn{1}{|c|}{22222222} &  \multicolumn{1}{c|}{$0$} &  \multicolumn{1}{l|}{[-1,-1,-1,-1,-1,-1,-1,-1]} &  \multicolumn{1}{c|}{1} &  \multicolumn{1}{c|}{1/0/0} &  \multicolumn{1}{c|}{0} \\
 \cline{1-1} \cline{2-2} \cline{3-3} \cline{4-4} \cline{5-5} \cline{6-6}
\end{tabular}\end{center}

\begin{center}
$F_4$

\begin{tabular}{cccccc}
\hline
 \multicolumn{1}{|c|}{$2\lambda_0(\mathcal O^\vee$)} &  \multicolumn{1}{c|}{${\mathcal O}$} &  \multicolumn{1}{c|}{$\l_1$} &  \multicolumn{1}{c|}{$\#W_{\text{rel}}$} &  \multicolumn{1}{c|}{$\#\cap-{\mathcal C}^{\vee}$} &  \multicolumn{1}{c|}{ord} \\
\cline{1-1} \cline{2-2} \cline{3-3} \cline{4-4} \cline{5-5} \cline{6-6}
 \multicolumn{1}{|c|}{0020} &  \multicolumn{1}{c|}{$F_4(a_3)$} &  \multicolumn{1}{c|}{[-1,1,-1,-1]} &  \multicolumn{1}{c|}{96} &  \multicolumn{1}{c|}{23/24/9} &  \multicolumn{1}{c|}{2} \\
 \cline{1-1} \cline{2-2} \cline{3-3} \cline{4-4} \cline{5-5} \cline{6-6}
 \multicolumn{1}{|c|}{2020} &  \multicolumn{1}{c|}{$A_1+A_1s$} &  \multicolumn{1}{c|}{[2,-1,-1,-1]} &  \multicolumn{1}{c|}{24} &  \multicolumn{1}{c|}{15/2/1} &  \multicolumn{1}{c|}{0} \\
 \cline{1-1} \cline{2-2} \cline{3-3} \cline{4-4} \cline{5-5} \cline{6-6}
 \multicolumn{1}{|c|}{2022} &  \multicolumn{1}{c|}{$A_1s$} &  \multicolumn{1}{c|}{[1,-1,-1,-1]} &  \multicolumn{1}{c|}{24} &  \multicolumn{1}{c|}{17/6/0} &  \multicolumn{1}{c|}{0} \\
 \cline{1-1} \cline{2-2} \cline{3-3} \cline{4-4} \cline{5-5} \cline{6-6}
 \multicolumn{1}{|c|}{2222} &  \multicolumn{1}{c|}{$0$} &  \multicolumn{1}{c|}{[-1,-1,-1,-1]} &  \multicolumn{1}{c|}{1} &  \multicolumn{1}{c|}{1/0/0} &  \multicolumn{1}{c|}{0}  \\
 \cline{1-1} \cline{2-2} \cline{3-3} \cline{4-4} \cline{5-5} \cline{6-6}
\end{tabular}
\end{center}

\begin{bibsection}

\begin{biblist}

\bib{Arthur}{article}{
   author={Arthur, James},
   title={Unipotent automorphic representations: conjectures},
   note={Orbites unipotentes et repr\'esentations, II},
   journal={Ast\'erisque},
   number={171-172},
   date={1989},
   pages={13--71},
   issn={0303-1179},
}

\bib{BM1}{article}
{
   author={Barbasch, Dan},
   author={Moy, Allen},
   title={A unitarity criterion for $p$-adic groups},
   journal={Invent. Math.},
   volume={98},
   date={1989},
   number={1},
   pages={19--37},
   issn={0020-9910},
}

\bib{bump}{book}{
   author={Bump, Daniel},
   title={Automorphic forms and representations},
   series={Cambridge Studies in Advanced Mathematics},
   volume={55},
   publisher={Cambridge University Press},
   place={Cambridge},
   date={1997},
   pages={xiv+574},
}
		
\bib{Collingwood}{book}{
   author={Collingwood, David H.},
   author={McGovern, William M.},
   title={Nilpotent orbits in semisimple Lie algebras},
   series={Van Nostrand Reinhold Mathematics Series},
   publisher={Van Nostrand Reinhold Co.},
   place={New York},
   date={1993},
}

\bib{GRS}{article}{
   author={Ginzburg, David},
   author={Rallis, Stephen},
   author={Soudry, David},
   title={On the automorphic theta representation for simply laced groups},
   journal={Israel J. Math.},
   volume={100},
   date={1997},
   pages={61--116},
}

\bib{GMRV}{article}{
   author={Green, Michael B.},
   author={Miller, Stephen D.},
   author={Russo, Jorge G.},
   author={Vanhove, Pierre},
   title={Eisenstein series for higher-rank groups and string theory
   amplitudes},
   journal={Commun. Number Theory Phys.},
   volume={4},
   date={2010},
   number={3},
   pages={551--596},
}
		
\bib{GMV}{article}{
   author={Green, Michael B.},
   author={Miller, Stephen D.},
   author={Vanhove, Pierre},
   title={Small representations, string instantons,
and Fourier modes Of Eisenstein series, with appendix ``Special unipotent representations'' by Dan Ciubotaru and Peter E. Trapa},
   note={arxiv:1111.2983},
}

\bib{jacquet}{article}{
   author={Jacquet, Herv{\'e}},
   title={On the residual spectrum of ${\rm GL}(n)$},
   conference={
      title={Lie group representations, II},
      address={College Park, Md.},
      date={1982/1983},
   },
   book={
      series={Lecture Notes in Math.},
      volume={1041},
      publisher={Springer},
      place={Berlin},
   },
   date={1984},
   pages={185--208},
}

\bib{Kim}{article}{
   author={Kim, Henry H.},
   title={The residual spectrum of $G_2$}
   journal={Can. J. Math.},
   volume={48},
   date={1996},
   number={6},
   pages={1245--1272},
}

\bib{Langlands}{book}{
   author={Langlands, Robert P.},
   title={On the functional equations satisfied by Eisenstein series},
   series={Lecture Notes in Mathematics, Vol. 544},
   publisher={Springer-Verlag},
   place={Berlin},
   date={1976},
   pages={v+337},
}

\bib{Moeglin}{article}{
   author={M{\oe}glin, Colette},
   title={Repr\'esentations unipotentes et formes automorphes de carr\'e
   int\'egrable},
   language={French, with English summary},
   journal={Forum Math.},
   volume={6},
   date={1994},
   number={6},
   pages={651--744},
}

\bib{MoeglinWaldspurger}{article}{
   author={M{\oe}glin, C.},
   author={Waldspurger, J.-L.},
   title={Le spectre r\'esiduel de ${\rm GL}(n)$},
   language={French},
   journal={Ann. Sci. \'Ecole Norm. Sup. (4)},
   volume={22},
   date={1989},
   number={4},
   pages={605--674},
}

\bib{MWbook}{book}{
   author={M{\oe}glin, C.},
   author={Waldspurger, J.-L.},
   title={Spectral decomposition and Eisenstein series},
   series={Cambridge Tracts in Mathematics},
   volume={113},
   note={Une paraphrase de l'\'Ecriture [A paraphrase of Scripture]},
   publisher={Cambridge University Press},
   place={Cambridge},
   date={1995},
}

\bib{shahidi}{book}{
   author={Shahidi, Freydoon},
   title={Eisenstein series and automorphic $L$-functions},
   series={American Mathematical Society Colloquium Publications},
   volume={58},
   publisher={American Mathematical Society},
   place={Providence, RI},
   date={2010},
}

\bib{Speh}{article}{
   author={Speh, Birgit},
   title={The unitary dual of ${\rm Gl}(3,\,{\bf R})$ and ${\rm Gl}(4,\,{\bf
   R})$},
   journal={Math. Ann.},
   volume={258},
   date={1981/82},
   number={2},
   pages={113--133},
}

\end{biblist}
\end{bibsection}

\end{document}